\theoremstyle{definition}
\newtheorem{thm}{Theorem}
\newtheorem{prop}{Proposition}[section]
\newtheorem*{proposition*}{Proposition}
\newtheorem{defn}{Definition}[section]
\newtheorem{corollary}{Corollary}[section]
\newtheorem{remark}{Remark}[section]
\newtheorem{lemma}{Lemma}[section]
\newcommand{\R}{\mathbb{R}}
\newcommand{\C}{\mathbb{C}}
\def\x{\noexpand\x}    
\edef\weierstrass{0}     
\edef\currentbn{1}        
    \global\edef\currentbn{2*\currentbn}    
    \global\edef\weierstrass{\weierstrass + (1/(\currentbn)*cos((\currentbn*\x) r))}    
\newcommand{\Address}{{
  \bigskip
  \footnotesize 
  \noindent\textsc{Lehrstuhl für Analysis und Geometrie, Universität Augsburg, D-86135 Augsburg, Germany}\\
  \textit{E-mail address:}\href{mailto:filipvbrocic@gmail.com}{\tt filipvbrocic@gmail.com}
  \vspace{5mm}
 
 \noindent\textsc{School of Mathematical Sciences, Tel Aviv University, Ramat Aviv, Tel Aviv 69978
Israel}\\
 \textit{E-mail address:}\href{mailto:smatijevic@tauex.tau.ac.il}{\tt smatijevic@tauex.tau.ac.il}
}}
\title{A note on the wild symplectic ellipsoids}
\author{Filip Bro\'ci\'c and Stefan Matijevi\'c}
\begin{document}

\maketitle

\begin{abstract}
We show that the symplectic $2$-product of $n$ two-dimensional star-shaped domains has an interior symplectomorphic to that of a symplectic ellipsoid. Adapting this construction, given $0<\alpha \leq 1$, we obtain that every open subset of $\R^{2n}$ with a smooth boundary is symplectomorphic to an open set whose boundary contains a set of Hausdorff dimension $2n-1+\alpha$.
 
\end{abstract}

\section{Introduction}

We say that $W \subset (\mathbb{R}^{2n}, \omega_0 = \sum_i dx_i \wedge dy_i)$ is a \emph{star-shaped domain} if it is the closure of an open, bounded set that is star-shaped with respect to the origin. Given two star-shaped domains $K \subset \R^{2k}$ and $T \subset \R^{2\ell}$, the symplectic $p$-product of $K$ and $T$ is defined by:
$$
K \times_p T := \bigcup_{t \in [0,1]} t^{\frac{1}{p}} K \times (1-t)^{\frac{1}{p}} T \subset \R^{2(k+\ell)},
$$
where, for $A \subset \R^n$ and $s>0$, the set $sA$ is given by $\{sx \mid x \in A\}$. The symplectic $p$-product was introduced and studied in \cite{KO23}. This operation is associative, that is, for $K \subset \R^{2k}$, $T \subset \R^{2\ell}$, and $G \subset \R^{2m}$, we have:
\[
K \times_p T \times_p G := (K \times_p T) \times_p G = K \times_p (T \times_p G)
= \bigcup_{\substack{t_1,t_2,t_3 \in [0,1]\\ t_1 + t_2 + t_3 = 1}} t_1^{\frac{1}{p}} K \times t_2^{\frac{1}{p}} T \times t_3^{\frac{1}{p}} G.
\]
Hence, the $p$-product of $n \in \mathbb{N}$ star-shaped domains is well-defined. In the Lagrangian setting, the $p$-product was explored in \cite{OR22, Br25}. In this paper, we will be specifically interested in the symplectic $2$-product.  

For $a_1, \dots, a_n > 0$, the symplectic ellipsoid $E(a_1, \dots, a_n) \subset \R^{2n} \cong \C^n$ is defined by
$$
E(a_1, \dots, a_n) := \left\{ (z_1, \dots, z_n) \;\middle|\; \sum_i \frac{\pi |z_i|^2}{a_i} \leq 1 \right\}.
$$

\begin{thm}\label{thm:main}
Let $W_1, W_2, \dots, W_n \subset \R^2$ be star-shaped domains with respective areas $a_1, \dots, a_n$. Then the interior of $W_1 \times_2 W_2 \times_2 \cdots \times_2 W_n$ is symplectomorphic to the interior of the symplectic ellipsoid $E(a_1, a_2, \dots, a_n)$.
\end{thm}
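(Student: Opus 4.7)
The plan is to produce, for each factor $W_i$, a symplectomorphism $\phi_i : \mathrm{int}(W_i) \to \mathrm{int}(D(a_i))$ that preserves the squared gauge function, i.e., $\rho_{D(a_i)}(\phi_i(z_i))^2 = \rho_{W_i}(z_i)^2$. Since the interior of the 2-product equals $\{(z_1,\ldots,z_n) : \sum_i \rho_{W_i}(z_i)^2 < 1\}$ and the interior of $E(a_1,\ldots,a_n)$ equals $\{(z_1,\ldots,z_n) : \sum_i \pi|z_i|^2/a_i < 1\}$, the product map $\Phi = (\phi_1,\ldots,\phi_n)$ then carries the former onto the latter and is symplectic as a product of symplectic maps.

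The construction of $\phi_i$ proceeds via action-angle coordinates. I would parameterize $\partial W_i$ by a smooth curve $\gamma_i : \R/\mathbb{Z} \to \partial W_i$ with $\det(\gamma_i,\gamma_i') \equiv 2 a_i$, a normalization always obtainable by reparameterizing any initial choice since the full integral of this determinant equals $2a_i$. Combined with $\rho_i := \rho_{W_i}$ and setting $H_i = \rho_i^2$, this yields symplectic polar coordinates $(H_i,\theta_i) \in (0,1] \times \R/\mathbb{Z}$ on $W_i \setminus \{0\}$ in which the standard form reads $a_i\,dH_i \wedge d\theta_i$. The disk $D(a_i) \setminus \{0\}$ admits the identical expression in its own natural action-angle coordinates, so the identity in these coordinates defines a gauge-preserving symplectomorphism $\phi_i : W_i \setminus \{0\} \to D(a_i) \setminus \{0\}$. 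The product $\Phi$ is therefore a symplectomorphism between the complements of the axial loci $\bigcup_i \{z_i = 0\}$ in the two interiors.

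The main obstacle will be extending $\Phi$ smoothly across these codimension-2 loci. Each $\phi_i$ is positively $1$-homogeneous and, as a consequence, smooth at the origin only when $W_i$ is itself a round disk. To resolve this I would modify each $\phi_i$ on a small gauge-ball $\{\rho_i < \varepsilon\}$ so as to agree there with a genuinely smooth 2D symplectomorphism produced by Moser's theorem, interpolating with the gauge-preserving map in a thin collar $\{\varepsilon < \rho_i < 2\varepsilon\}$. To avoid destroying the 2-product structure these modifications must preserve the total $\sum_i H_i$; this should be achievable by a fibered (parametric) Moser construction that modifies each factor only near its own axis, treating the remaining factors as smoothly varying parameters. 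Verifying that the resulting global map is smooth and symplectic across every axial locus is the heart of the argument and its main technical difficulty.
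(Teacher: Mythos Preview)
Your construction of a gauge-preserving symplectomorphism $\phi_i: W_i\setminus\{0\} \to D(a_i)\setminus\{0\}$ via action-angle coordinates is correct and is exactly the content of the paper's Lemma~\ref{lma:ham} (phrased there as a $1$-homogeneous Hamiltonian isotopy on $\R^2\setminus\{0\}$, obtained by lifting Moser on $S^1$ to the symplectization). Two genuine gaps remain, and the paper resolves both with a single mechanism absent from your sketch.

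First, the theorem is stated for \emph{arbitrary} star-shaped $W_i$, with no regularity on $\partial W_i$; the point of the paper is precisely to allow boundaries of Hausdorff dimension greater than~$1$. Your action-angle step requires $\partial W_i$ to be a smooth curve and the gauge $\rho_{W_i}$ to be smooth, so it only treats nice domains.

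Second, your extension across $\{z_i=0\}$ does not work as written. A product map $(\tilde\phi_1,\dots,\tilde\phi_n)$ carries the $2$-product interior exactly onto the ellipsoid interior only if each $\tilde\phi_i$ is gauge-preserving, but a $1$-homogeneous $\phi_i$ is smooth at $0$ only when $W_i$ is a round disk; hence no factorwise smooth modification near $0$ can keep the gauge exactly. Making $\tilde\phi_i$ depend on the other coordinates (your ``fibered Moser'') abandons the product structure, and nothing in the sketch controls the global image of such a map or the compatibility of the modifications near the intersections $\{z_i=0\}\cap\{z_j=0\}$. The paper does not attempt this. It cuts off the generating Hamiltonian of $\phi_i$ near $z_i=0$ to obtain a genuinely smooth symplectomorphism $\Psi$ of $\R^{2n}$ satisfying only the approximate inclusion $(1-\epsilon)(W_1\times_2\cdots\times_2 W_n)\subset\Psi(E(a_1,\dots,a_n))\subset(1+\epsilon)(W_1\times_2\cdots\times_2 W_n)$ (Lemma~\ref{lma:smooth}), and then runs an exhaustion: choose nested nice $W_i^k\nearrow W_i$, apply the lemma with shrinking $\epsilon_k$, and glue the successive $\Psi_k$ using connectedness of symplectic disk embeddings into surfaces. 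This exhaustion simultaneously absorbs the $\epsilon$-error at the axes and removes the smoothness hypothesis on $\partial W_i$.
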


The boundary of a generic star-shaped domain is highly irregular. In fact, the set of star-shaped domains whose boundary is nowhere differentiable is generic in the Baire category sense. Hence, this theorem provides examples of domains that do not have a well-defined tangent space at any point by taking the $2$-product of generic star-shaped domains $W_i \subset \R^2$, yet these domains have interiors symplectomorphic to the interiors of symplectic ellipsoids. In particular, this implies that the boundary does not admit a characteristic foliation (even in the generalized sense \cite{Cl81, Cl83}), despite the interior being symplectomorphic to that of a symplectic ellipsoid. Furthermore, there exist curves bounding star-shaped domains in $\R^2$ whose Hausdorff dimension is $2$. An explicit construction of a function which leads to such an example can be found in \cite{XZ07}. In \cite{FH11}, it was shown that the set of such curves is large in an appropriate sense, even though it is not generic.

\begin{corollary}\label{cor:haus}
Given positive real numbers $a_1,\dots, a_n$ and $\alpha \in (0,1]$, there exists a compact star-shaped domain $W \subset \R^{2n}$ whose boundary has Hausdorff dimension $2n-1+\alpha$ and whose interior is symplectomorphic to the interior of $E(a_1, \dots, a_n)$.
\end{corollary}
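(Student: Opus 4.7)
The plan is to combine Theorem~\ref{thm:main} with an explicit choice of the two-dimensional factors. I take $W_1\subset\R^2$ to be a compact star-shaped domain of area $a_1$ whose boundary has Hausdorff dimension $1+\alpha$, and let $W_2,\dots,W_n$ be round disks of areas $a_2,\dots,a_n$. Then $W:=W_1\times_2\cdots\times_2 W_n$ is compact and star-shaped in $\R^{2n}$, and Theorem~\ref{thm:main} identifies its interior symplectically with $\mathrm{int}\, E(a_1,\dots,a_n)$. The remaining task is to show $\dim_H \partial W=2n-1+\alpha$.

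For the construction of $W_1$: when $\alpha=1$, I invoke the construction in \cite{XZ07}. When $\alpha\in(0,1)$, I take $W_1=\{(r,\theta):r\le f(\theta)\}$ in polar coordinates, where $f:S^1\to(0,\infty)$ is a positive $(1-\alpha)$-H\"older function bounded away from $0$ whose graph has Hausdorff dimension $2-(1-\alpha)=1+\alpha$; a suitable Weierstrass series provides such an $f$. Star-shapedness of $W_1$ is automatic, and the area can be adjusted to $a_1$ by scaling, which does not affect the Hausdorff dimension of the boundary.

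For the dimension computation: since the $W_i$ ($i\ge 2$) are round disks, $E:=W_2\times_2\cdots\times_2 W_n$ is the smooth ellipsoid $E(a_2,\dots,a_n)\subset\R^{2n-2}$. Writing $\|\cdot\|_1$ for the Minkowski gauge of $W_1$ and $\|\cdot\|_E$ for that of $E$, one has $\partial W=\{(z_1,w):\|z_1\|_1^2+\|w\|_E^2=1\}$. The key step is to parameterize $\partial W$ via
$$\Psi:E\times\partial W_1\longrightarrow \partial W,\quad \Psi(w,\xi_1)=\bigl(s(w)\,\xi_1,\;w\bigr),\qquad s(w):=\sqrt{1-\|w\|_E^2},$$
which is surjective. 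Both $\Psi$ and its inverse $(z_1,w)\mapsto(w,\,z_1/s(w))$ are Lipschitz on $K\times\partial W_1$ for any compact $K\subset E\setminus\partial E$, because $s$ is smooth and positive there. The standard product formula (applicable since $K$ is a smooth $(2n-2)$-manifold) gives $\dim_H(K\times\partial W_1)=(2n-2)+(1+\alpha)=2n-1+\alpha$, which transfers through $\Psi$ to a subset of $\partial W$ of this dimension. Exhausting $E\setminus\partial E$ by such compacts yields the matching upper bound (the residual $\Psi(\partial E\times\partial W_1)=\{0\}\times\partial E$ being of dimension only $2n-3$), so $\dim_H\partial W=2n-1+\alpha$.

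\textbf{Main obstacle.} The main subtlety lies in the choice of parameterization: the naive polar-type map $(t,\xi_1,\dots,\xi_n)\mapsto(t_i^{1/2}\xi_i)_i$ fails to be bi-Lipschitz, because extracting $\|z_1\|_1$ from $z_1$ requires inverting the H\"older Minkowski gauge of the fractal $W_1$. Grouping $z_1$ together as a single rough coordinate and letting the smooth ellipsoid $E$ absorb all the smooth parameters avoids ever inverting this rough gauge, which is what makes the product formula for Hausdorff dimension genuinely applicable here.
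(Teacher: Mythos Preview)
Your proof is correct and follows the same overall strategy as the paper: take $W=W_1\times_2 E(a_2,\dots,a_n)$ with $\partial W_1$ of Hausdorff dimension $1+\alpha$, invoke Theorem~\ref{thm:main} for the symplectomorphism, and reduce the dimension of $\partial W$ to a product formula involving $\partial W_1$ and a smooth $(2n-2)$-dimensional piece. The execution differs slightly: the paper writes $\partial W$ locally as a graph $r_n=r_n(r_1,\theta_1,\dots,\theta_n)$ and strips away smooth additive and multiplicative factors to isolate the graph of $f$, whereas you parameterize $\partial W$ globally (off $\{0\}\times\partial E$) by the bi-Lipschitz map $\Psi(w,\xi_1)=(s(w)\xi_1,w)$. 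Your route is a little cleaner in that it avoids the square-root and graph manipulations; the paper's route makes the reduction to the one-variable graph of $f$ more explicit. One small caveat: your appeal to ``a suitable Weierstrass series'' should be made precise, since for the classical $W_{a,b}$ the Hausdorff (as opposed to Minkowski) dimension of the graph is only known for special parameters; the paper handles this by citing the phase-shifted Weierstrass functions of \cite{Hu98} for $\alpha<1$ and \cite{XZ07} for $\alpha=1$.
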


In \cite{CGH25}, examples are given of compact domains in $\C^2$ whose boundaries have Minkowski dimension arbitrarily close to $4$ and whose interiors are symplectomorphic to the interior of an unbounded toric domain. For a certain class of unbounded toric domains, the boundary has Minkowski dimension bounded from below by a constant strictly bigger than $3$. In general, the Hausdorff dimension is a lower bound for the Minkowski dimension (see \cite[Proposition 1.1]{FH11}). For $0<\alpha<1$, we use the graphs of a modified Weierstrass functions from \cite{Hu98}, which have Hausdorff dimension $1+\alpha$. By modifying the functions in the proof slightly, for any $0<\alpha\leq1$, Corollary \ref{cor:haus} also provides examples of compact domains $W \subset \C^n$ whose boundaries have Minkowski dimension $2n-1+\alpha$, and whose interiors are symplectomorphic to the interior of a symplectic ellipsoid. Symplectic ellipsoids are classic examples of bounded toric domains that are both convex and concave in the sense of \cite{CG19}. Note that our theorem illustrates a different phenomenon than \cite{CGH25}: we start from a domain that has a smooth boundary, and ask how irregular can be a boundary of a domain that has a symplectomorphic interior to the starting domain, whereas in \cite{CGH25} they show that for certain domains, the boundary can not be made more regular. In the spirit of making the boundary more irregular, after localizing the construction in Theorem \ref{thm:main}, we obtain:

\begin{thm}\label{thm:domain}
Let $U \subset \R^{2n}$ be an open set whose topological boundary has a point $p$ with a
smooth neighborhood. For every $\alpha \in (0,1]$, there exists a symplectomorphism $\Psi: U \to W$ where $W\subset \R^{2n}$ is an open set whose boundary contains an open set of Hausdorff dimension $2n-1+ \alpha$.
\end{thm}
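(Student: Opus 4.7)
The idea is to localize the construction behind Corollary~\ref{cor:haus} inside a Darboux chart at the smooth boundary point $p$, so that a $2$-product with a fractal boundary piece is grafted into $U$ via a symplectomorphism that is the identity outside a neighborhood of $p$.

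\emph{Step 1 (Darboux reduction).} Using the smoothness of $\partial U$ near $p$ and a standard Darboux-with-hypersurface normal form, I would find a symplectomorphism $\phi: V \to \phi(V) \subset \R^{2n}$ from an open neighborhood $V \ni p$ with $\phi(p) = 0$ and $\phi(U \cap V) = \phi(V) \cap \{y_n < 0\}$; after shrinking assume $\phi(V) \supset \overline{B(0,R)}$. It then suffices to produce a symplectomorphism $\Psi_0: B(0,R) \cap \{y_n<0\} \to W_0$ that agrees with the identity on a collar near $\partial B(0,R) \cap \{y_n \leq 0\}$ and such that $\partial W_0$ contains an open subset of Hausdorff dimension $2n-1+\alpha$; composing with $\phi$ and extending by the identity on $U \setminus V$ gives the desired $\Psi: U \to W$.

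\emph{Step 2 (Local fractal model).} Following the proof of Corollary~\ref{cor:haus}, I would take planar star-shaped $W_1, \ldots, W_{n-1}$ equal to unit disks and $W_n$ equal to the unit disk outside a small arc, on which $\partial W_n$ is the graph of a modified Weierstrass function from \cite{Hu98} of Hausdorff dimension $1+\alpha$. After rescaling, the compact $2$-product $K := W_1 \times_2 \cdots \times_2 W_n = \{\sum_i \rho_{W_i}(z_i)^2 \le 1\}$ lies inside $\overline{B(0,R/2)} \cap \{y_n \leq 0\}$. The $2$-product structure exhibits an open piece of $\partial K$ as a smooth bundle over the fractal arc of $\partial W_n$, contributing Hausdorff dimension $(2n-2)+(1+\alpha) = 2n-1+\alpha$; outside this piece, $K$ coincides pointwise with a round ball of the same capacity.

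\emph{Step 3 (Gluing).} By Theorem~\ref{thm:main}, $\mathrm{int}(K)$ is symplectomorphic to the interior of a round ball $B'$ of the same capacity, which I would place inside $B(0,R/2) \cap \{y_n<0\}$; denote the symplectomorphism $\sigma: \mathrm{int}(B') \to \mathrm{int}(K)$. Because all the $W_i$ agree with standard disks outside the fractal arc, and because the proof of Theorem~\ref{thm:main} reduces to the identity when all $W_i$ are disks, one may take $\sigma$ to agree with the identity everywhere except over the preimage of the fractal region of $\partial K$, which is compactly contained in $\mathrm{int}(B')$. Consequently $\sigma$ extends to all of $B(0,R)\cap\{y_n<0\}$ by the identity, yielding $\Psi_0$; the image $W_0$ then contains the fractal piece of $\partial K$ in its topological boundary.

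\emph{Main obstacle.} The critical technical point is Step 3: extracting from the proof of Theorem~\ref{thm:main} a compactly supported refinement. While the $2$-product of unit disks is literally the ball and so Theorem~\ref{thm:main}'s symplectomorphism is the identity in that case, one must show that perturbing a single $W_n$ on a small arc produces a symplectomorphism that differs from the identity only on a region compactly contained in the ball, so that the extension by the identity in the gluing step is legitimate and smooth. The $2$-product couples the planar factors nontrivially, so this compact-support property is not automatic; it requires inspection of the explicit symplectomorphism constructed in Theorem~\ref{thm:main}, possibly supplemented by a cut-off Hamiltonian isotopy argument to push any residual deformation away from $\partial B'$. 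Once this is in place, the Hausdorff dimension computation (additivity for products of smooth manifolds with fractals) and the final extension-by-identity are routine.
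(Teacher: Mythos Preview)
Your overall plan (localize near $p$, import the fractal $2$-product, glue) matches the paper's strategy, but Steps~2--3 contain a genuine geometric gap that makes the argument fail as written.

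The problem is the placement of $K$ and $B'$ \emph{inside} the open half-ball, together with the requirement that $\sigma:\mathrm{int}(B')\to\mathrm{int}(K)$ be ``compactly supported'' in $\mathrm{int}(B')$. These two conditions are incompatible: if $\sigma$ equals the identity on a neighborhood of $\partial B'$, then $\sigma$ extends to a homeomorphism $\overline{B'}\to\overline{\sigma(\mathrm{int}(B'))}$ fixing $\partial B'$ pointwise, which forces $\overline{K}=\overline{B'}$ and hence $\partial K=\partial B'$ --- no fractal boundary at all. Conversely, if $\sigma$ is \emph{not} the identity near some piece of $\partial B'$, then you cannot extend $\Psi_0$ by the identity across that piece (you lose injectivity if $K$ bumps outward, and continuity if $K$ dents inward). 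Either way, since $B'$ and $K$ sit in the \emph{interior} of the half-ball, the topological boundary of $W_0=\Psi_0(\text{half-ball})$ is still the smooth set $\partial(\text{half-ball})$; the fractal piece of $\partial K$ never reaches $\partial W_0$. The ``main obstacle'' you flag is therefore not a technicality but a genuine obstruction in your setup.

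The paper avoids this by choosing a different local model in Step~1: instead of a half-space, it uses the contact Darboux theorem to identify a neighborhood of $p$ in $\partial U$ with a neighborhood of a point on the round sphere $\partial B^{2n}(\epsilon)$, so that locally $U$ looks like $\mathrm{int}(B^{2n}(\epsilon))$. This is the natural model because the symplectomorphism $\Psi$ of Theorem~\ref{thm:main} is built from $1$-homogeneous Hamiltonians and hence acts radially on $\partial B^{2n}(\epsilon)$. In Part~2 the paper then cuts off those generating Hamiltonians $H^k_t$ by a bump function $\rho$ supported in a small ball $B^{2n}(p,2\epsilon')$ \emph{centered on the boundary sphere}, producing a symplectic embedding $\Phi:\mathrm{int}(B^{2n}(\epsilon))\to\R^{2n}$ that equals the identity on $\mathrm{int}(B^{2n}(\epsilon))\setminus B^{2n}(p,2\epsilon')$ and equals the Theorem~\ref{thm:main} map $\Psi_k$ on $B^{2n}(p,\epsilon')\cap\mathrm{int}(B^{2n}(\epsilon))$. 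The point is that the non-identity region meets $\partial B^{2n}(\epsilon)$, so the fractal piece of $\partial(W_1\times_2\cdots\times_2 W_n)$ genuinely replaces a patch of the boundary sphere. To salvage your approach you would need to move the deformation to the flat boundary $\{y_n=0\}$ of your half-space model; but the $2$-product is star-shaped about the origin, so the sphere model is the one in which the cut-off works cleanly.
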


\begin{remark}
By inspecting the proof of Lemma \ref{lma:smooth}, one can easily see that Theorem \ref{thm:main} also holds in the $p$-product setting, where, instead of $E(a_1,\dots,a_n)$, we consider the interior of $\mathbb{D}(a_1) \times_p \cdots \times_p \mathbb{D}(a_n)$, where $\mathbb{D}(a_i)$ is a disc centered at the origin of area $a_i$. In fact, one can define the $p$-product of two Liouville domains $(W_1, \lambda_1)$ and $(W_2, \lambda_2)$ as the subset:
\[
\{r_1^{p/2} + r_2^{p/2} \leq 1\} \subset \widehat{W}_1 \times \widehat{W}_2,
\] 
where $\widehat{W}_i$ are the completions $W_i \sqcup_{\partial W_i} \partial W_i \times [1, +\infty)$ of $W_i$. Here, $r_i$ are $1$-homogeneous functions on $\widehat{W}_i$, continuously extended by $0$ on the $\mathrm{Core}(W_i, \lambda_i)$, and such that $W_i = \{ r_i \leq 1\}$. Now, if $K_i \subset \widehat{W}_i$ is the image of a Hamiltonian flow invariant under the Liouville flow, then the interiors of $W_1 \times_p W_2$ and $K_1 \times_p K_2$ are symplectomorphic by arguments analogous to those in Section \ref{sec:main_proof}.
\end{remark}

As a contrast to Theorem \ref{thm:main}, it was pointed out to us by Alberto Abbondandolo that the interior of the rational polydisc $\mathbb{D}(a) \times \mathbb{D}(b)$ is never symplectomorphic to the interior of a smooth star-shaped domain. The argument is originally due to Oliver Edtmair (\cite{Ed25}) and relies on $ECH$ capacities. Namely, $\mathbb{D}(a) \times \mathbb{D}(b)$ has the same $ECH$ capacities as $E(a,b)$. But if a smooth star-shaped domain $W$ has the same $ECH$ capacities as the rational ellipsoid $E(a,b)$, then $W$ is symplectomorphic to $E(a,b)$. However, $\mathbb{D}(a) \times \mathbb{D}(b)$ is not symplectomorphic to $E(a,b)$ as can be seen with Ekeland-Hofer capacities.

\subsection{Dynamics on sufficiently regular boundaries and the Zoll property in the convex setting}

In this section, we discuss the dynamics of the generalized characteristic foliation on domains with sufficiently regular non-smooth boundaries. For instance, using the construction from \cite{AK70a, AK70b}, one obtains strongly convex domains whose interiors are symplectomorphic to those of irrational ellipsoids, yet whose boundary dynamics are quite different (see \cite[Appendix]{ABE25}). In our setting, when the dynamics is well-defined, we have the following:

\begin{prop}\label{prop:dyn}
Let $W_i \subset \R^2$ be compact star-shaped domains of area $a_i$, with Lipschitz boundaries and such that each ray from the origin intersects the boundary at a single point. Then the dynamics of the characteristic foliation is well-posed and is topologically conjugate to the Reeb flow on $E(a_1, \dots, a_n)$.
\end{prop}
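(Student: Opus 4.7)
The plan is to build the topological conjugation explicitly from the product structure of the $2$-product. Write $W := W_1 \times_2 \cdots \times_2 W_n$ and let $r_i(z_i) := \|z_i\|_{W_i}^2$ be the squared Minkowski gauge of $W_i$, so that
$$
\partial W = \{H = 1\}, \qquad H(z_1,\dots,z_n) := \sum_{i=1}^n r_i(z_i).
$$
The assumption that $\partial W_i$ is Lipschitz and meets each radial ray in exactly one point makes the polar function $\rho_i : S^1 \to \R_{>0}$ Lipschitz, and hence $r_i$ is locally Lipschitz on $\R^2$. The map $\mu : \partial W \to \Delta := \{c \in \R^n_{\ge 0} : \sum c_i = 1\}$, $z \mapsto (r_1(z_1),\dots,r_n(z_n))$, gives a torus fibration with fibres $T_c = \prod_i \sqrt{c_i}\,\partial W_i$ (where a factor collapses to $\{0\}$ when $c_i = 0$), and the same description applies to $\partial E(a_1,\dots,a_n)$ with $r_i$ replaced by $\pi|z_i|^2/a_i$.

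First I would handle the one-dimensional factors. Each $\partial W_i$ is a Lipschitz Jordan curve, so its generalized characteristic foliation in the sense of Clarke is just the curve itself traversed in a fixed direction; normalizing by $\alpha_0 := \tfrac{1}{2}(x\,dy - y\,dx)$ so that $\alpha_0(R_i) = 1$ gives a parametrization $\psi_i : \R/a_i\Z \to \partial W_i$, where the total period equals $\int_{\partial W_i}\alpha_0 = a_i$ by Stokes. Let $\sigma_i : \R/a_i\Z \to \partial \mathbb{D}(a_i)$ denote the analogous parametrization of the round boundary and set $\phi_i := \sigma_i \circ \psi_i^{-1} : \partial W_i \to \partial \mathbb{D}(a_i)$. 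Extending $\phi_i$ by $1$-homogeneity via $\phi_i(\lambda z) := \lambda\phi_i(z)$ for $\lambda \ge 0$ produces a homeomorphism of $\R^2$ sending $\sqrt{c}\,\partial W_i$ onto $\sqrt{c}\,\partial \mathbb{D}(a_i)$ for every $c \ge 0$. Because $r_i$ is $2$-homogeneous its Hamiltonian vector field $X_{r_i}$ is $1$-homogeneous, so the flow of $X_{r_i}$ has period $a_i$ on every level $\sqrt{c}\,\partial W_i$, and $\phi_i$ intertwines this flow with the rotation flow of $\pi|z|^2/a_i$ on the corresponding level of the disc.

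Next I would assemble the global map $\Phi : \partial W \to \partial E(a_1,\dots,a_n)$ by $\Phi(z_1,\dots,z_n) := (\phi_1(z_1),\dots,\phi_n(z_n))$. By construction $\Phi$ preserves the torus fibration over $\Delta$ and acts as the product of the circle homeomorphisms $\phi_i$ on each torus $T_c$; continuity at the strata where some $c_i$ vanish is immediate from the $1$-homogeneity of each $\phi_i$. The generalized Hamiltonian flow of $H = \sum_i r_i$ on $\partial W$ splits over the factors because each $X_{r_i}$ acts only in the $i$-th $\R^2$, so on a torus $T_c$ with all $c_i > 0$ it is a product flow whose $i$-th frequency equals $1/a_i$ — exactly the frequency of the $i$-th component of the Reeb flow on the corresponding ellipsoid torus. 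Combined with the componentwise intertwining, $\Phi$ is the desired topological conjugacy.

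The main obstacle is verifying that the generalized characteristic foliation is genuinely well-posed at the non-smooth points of $\partial W$. Here I would use the product structure: at any $z \in \partial W$, the Clarke subdifferential of $H$ is the product of the per-factor Clarke subdifferentials of $r_i$ at $z_i$, and each of the latter projects onto a one-dimensional cone tangent to the Lipschitz Jordan curve $\partial W_i$ at $z_i$ in the unique orientation singled out by the curve. Consequently every element of the Clarke Hamiltonian inclusion at $z$ defines the same direction along the torus fibre through $z$, producing a trajectory in $\partial W$ unique up to reparametrization. The conclusion of the proposition then follows by combining this uniqueness with the one-dimensional analysis and the product description above.
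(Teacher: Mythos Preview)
Your proposal is correct and follows essentially the same route as the paper: both arguments use that $H_{W_1\times_2\cdots\times_2 W_n}=\sum_i H_{W_i}$ so the generalized characteristic flow splits as a product of the one-dimensional flows $\Phi^t_{H_{W_i}}$, each of which is well-posed because a Lipschitz Jordan curve is one-dimensional and has period $a_i$; the conjugacy is then assembled factorwise. The only cosmetic differences are that you phrase things via the torus fibration over the simplex and build the map from $\partial W$ to the ellipsoid, whereas the paper writes the inverse map $\Psi(r_1 e^{2\pi i\theta_1},\dots,r_n e^{2\pi i\theta_n})=(\Phi_{H_{W_1}}^{\theta_1 a_1}(r_1),\dots,\Phi_{H_{W_n}}^{\theta_n a_n}(r_n))$ directly in polar coordinates.
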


In \cite{GGM21}, it was shown that a smooth strongly convex domain is Zoll if and only if the first Ekeland-Hofer spectral invariant (see \cite{EH87}) coincides with the $n$-th one (see also \cite{GRT25}). In \cite{Ma24}, it was further shown that the Gutt-Hutchings capacities coincide with these invariants. Consequently, being Zoll in a smooth, strongly convex setting is a symplectic notion; that is, it depends only on the interior of the convex body. Moreover, in the convex setting, the notion of the Zoll property can be extended topologically, as introduced in \cite{Ma25}. Specifically, one can show that the Fadell-Rabinowitz index of the space of generalized systols of $\partial K \subset \C^n$ being at least $n$ is equivalent to $c_1^{\mathrm{GH}}(K) = c_n^{\mathrm{GH}}(K)$ (see \cite{GH18}). The generalized Zoll property was further investigated in \cite{HK25}. When the characteristic dynamics is well-posed, this condition on the Fadell-Rabinowitz index is equivalent to the boundary being foliated by systoles (see \cite[Theorem 1.7]{Ma25}).  In particular, the $2$-product $W_1 \times_2 \cdots \times_2 W_n$ of $n$ convex bodies of equal area is generalized Zoll. Moreover, from Proposition \ref{prop:dyn}, the characteristic dynamics on the boundary of $W_1 \times_2 \cdots \times_2 W_n$ is foliated by systols when $W_i$ are star-shaped with Lipschitz boundaries and of equal area.

A convex body $K \subset \C^{n}$ \emph{cuts additively} if, for every hyperplane $H$ that separates $K$ into $K_1$ and $K_2$, we have
$$
c_{\mathrm{EHZ}}(K) = c_{\mathrm{EHZ}}(K_1) + c_{\mathrm{EHZ}}(K_2).
$$
If the boundary of a generalized Zoll convex body $K$ has well-defined characteristic dynamics, it was shown in \cite[Theorem A]{HK25} that $K$ cuts additively. Combining Proposition \ref{prop:dyn} and \cite[Theorem A]{HK25} leads to:

\begin{corollary}
If $W_i$ are convex bodies whose interiors contain the origin with the same area, then $W_1 \times_2 \cdots \times_2 W_n$ cuts additively.  
\end{corollary}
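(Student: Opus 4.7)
The plan is to invoke Proposition~\ref{prop:dyn} together with the Zoll property of the round ellipsoid and apply \cite[Theorem A]{HK25}, which is stated in the excerpt. The argument is essentially a combination of already-stated facts, so I would organize it into three short checks.

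First, I would verify that $K := W_1 \times_2 \cdots \times_2 W_n$ is itself a convex body containing the origin in its interior. Writing $\mu_i$ for the gauge of $W_i$, one has $K = \{(x_1,\dots,x_n) \mid \sum_i \mu_i(x_i)^2 \leq 1\}$. Each $\mu_i$ is convex, $1$-homogeneous, and non-negative (since $W_i$ is convex with $0$ in its interior), hence $\mu_i^2$ is convex. The sum of convex functions is convex, so $K$ is a sublevel set of a convex function and therefore convex. Each $W_i$ also has a Lipschitz boundary met by any ray from the origin in a single point, which is precisely the hypothesis of Proposition~\ref{prop:dyn}.

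Second, let $a$ denote the common area. By Proposition~\ref{prop:dyn}, the generalized characteristic foliation on $\partial K$ is well-posed and topologically conjugate to the Reeb flow on $\partial E(a,\dots,a)$. The latter is Zoll: every Reeb orbit closes up with action $a$, so $\partial E(a,\dots,a)$ is foliated by systoles. Transporting this foliation by the topological conjugacy gives a foliation of $\partial K$ by systoles. Equivalently, since the Gutt--Hutchings capacities are symplectic invariants of the interior and $c_i^{\mathrm{GH}}(E(a,\dots,a)) = a$ for all $i$, we obtain $c_1^{\mathrm{GH}}(K) = c_n^{\mathrm{GH}}(K)$, which is the generalized Zoll condition recalled in the excerpt.

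Third, since $K$ is a generalized Zoll convex body whose characteristic dynamics is well-posed, \cite[Theorem A]{HK25} applies directly and gives that $K$ cuts additively.

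There is no real obstacle in this proof: all the content lives in Proposition~\ref{prop:dyn} and in the cited \cite[Theorem A]{HK25}. The only points demanding a moment of care are the convexity of the $2$-product (which uses $p=2 \geq 1$ and the convexity of the factors in an essential way) and the fact that the Gutt--Hutchings capacities see only the interior, so Theorem~\ref{thm:main} transports the equality $c_1^{\mathrm{GH}} = c_n^{\mathrm{GH}}$ from the round ellipsoid to $K$.
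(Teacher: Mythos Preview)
Your proposal is correct and follows exactly the route the paper indicates: the paper does not give a separate proof but simply says the corollary follows by combining Proposition~\ref{prop:dyn} with \cite[Theorem A]{HK25}, after the preceding discussion establishing that $W_1 \times_2 \cdots \times_2 W_n$ is generalized Zoll via $c_1^{\mathrm{GH}} = c_n^{\mathrm{GH}}$. Your write-up spells this out with a useful extra detail the paper leaves implicit, namely the verification that the $2$-product of convex bodies is itself convex (needed for \cite[Theorem A]{HK25} to apply).
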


Note that there are examples of convex curves that are not differentiable on a (countable) dense subset. 

\subsection{On the Zoll property beyond convexity}

Corollary \ref{cor:haus} in particular implies that it is not possible to extend the Zoll property dynamically in such a way that includes all star-shaped domains whose interior is symplectomorphic to the interior of the ball. It is known by \cite{ABE25} that Zoll domains are maximizers of the systolic ratio in $C^2$ topology but not in $C^0$-topology in a star-shaped setting. Hence, the condition on the systolic ratio being locally maximized can't serve us for the classification of highly irregular star-shaped domains with an interior symplectomorphic to the open ball. 

Note that $c_1^{\mathrm{GH}} (W) = c_n^{\mathrm{GH}}(W)$ still holds by its symplectic nature for $W = W_1 \times_2 \cdots \times_2 W_n$ whenever $W_i \subset \R^2$ are star-shaped domains of the same area. The condition $c_1^{GH}(W)=c_n^{GH}(W)$ is indeed a good candidate to help the classification of star-shaped domains whose interior is symplectomorphic to the interior of a ball (see \cite{GRT25}), but it is not clear how to interpret it if the boundary is of too low regularity. 

Let $W\subset \mathbb{R}^{2n}$ be a star-shaped domain. Following \cite{Ca25}, we say that $W$ is \textit{boundary minimal}\footnote{Note that in \cite{Ca25} the boundary minimal condition was defined with respect to the spectral capacity instead of $c_1^{EH}$.} for the first Ekeland-Hofer capacity $c_1^{EH}$ if for every open $U$ such that $\partial W \cap U \neq \emptyset$ it holds $c^{EH}_{1}(W \setminus U)<c_1^{EH}(W)$.  It is convenient to use $c_1^{EH}$ for the definition, since it is well defined for all subsets of $\R^{2n}$. For the definition of $c_1^{EH}$ see \cite{EH89, EH90}. In \cite{GR24} it was shown that for star-shaped domains $W$ with smooth boundary, Gutt-Hutchings capacities coincide with Ekeland-Hofer capacities.

In \cite[Theorem 5]{Ca25} it was shown that for a strongly convex domain $K$, being Zoll is equivalent to $K$ being boundary minimal. In that regard, it would be interesting to understand whether the symplectic $2$-product of domains with the same areas is boundary minimal. From Theorem \ref{thm:main} it follows:

\begin{prop}\label{prop:boundary_minimal}
If $W_i$ are star-shaped domains with the same area, such that a radial ray intersects the boundary in a single point, then $W_1 \times_2 \cdots \times_2 W_n$ is boundary minimal.
\end{prop}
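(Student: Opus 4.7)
Set $a$ to the common area and write $W = W_1 \times_2 \cdots \times_2 W_n$. The plan is to upgrade the symplectomorphism of Theorem~\ref{thm:main} to a global one sending $W$ onto the round ball $B^{2n}(a) = E(a,\dots,a)$, and then to transport the boundary minimality of the ball back to $W$. As a first step, Theorem~\ref{thm:main}, the symplectic invariance and $\lambda^2$-conformality of $c_1^{EH}$, and the inclusion $W \subset (1+\varepsilon)\,\mathrm{int}(W)$ (valid because each radial ray meets $\partial W$ in a unique point) together give $c_1^{EH}(W) = c_1^{EH}(B^{2n}(a)) = a$.

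The radial hypothesis forces each $\partial W_i$ to be a continuous graph $\{r = \rho_i(\theta)\}$, and under this assumption the construction underlying Theorem~\ref{thm:main} (see Lemma~\ref{lma:smooth}) yields $\Phi$ as the coordinatewise product $(\phi_1,\dots,\phi_n)$, where $\phi_i(r,\theta) = (r\rho_0/\rho_i(\theta),\,g_i(\theta))$ is the classical area- and gauge-preserving $2$-dimensional map with $\rho_0 = \sqrt{a/\pi}$ and $g_i'(\theta) = \rho_i(\theta)^2/\rho_0^2$. Because this formula is $1$-homogeneous in $r$, it extends each $\phi_i$ to an area-preserving homeomorphism of $\R^2$ which is smooth off the origin. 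Assembling, I obtain a $1$-homogeneous symplectomorphism $\widetilde\Phi \colon \R^{2n} \to \R^{2n}$, defined and smooth away from the coordinate hyperplanes $\{z_i = 0\}$, satisfying $\widetilde\Phi(\lambda W) = \lambda B^{2n}(a)$ for every $\lambda \geq 0$; in particular $\widetilde\Phi(\partial W) = \partial B^{2n}(a)$.

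Given any open $U \subset \R^{2n}$ meeting $\partial W$ at $p$, its image $U' := \widetilde\Phi(U)$ is open and meets $\partial B^{2n}(a)$ at $\widetilde\Phi(p)$. The ball $B^{2n}(a)$, being smooth, strongly convex, and Zoll, is boundary minimal for $c_1^{EH}$ by \cite[Theorem~5]{Ca25} (using the coincidence of the spectral and first Ekeland-Hofer capacities on such domains), so $c_1^{EH}(B^{2n}(a) \setminus U') < a$. Symplectic invariance of $c_1^{EH}$ under $\widetilde\Phi$ then delivers $c_1^{EH}(W \setminus U) = c_1^{EH}(B^{2n}(a) \setminus U') < a$, completing the argument.

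The main technical obstacle will be the reduced regularity of $\widetilde\Phi$ along the union of coordinate hyperplanes $\bigcup_i \{z_i = 0\}$: unless every $W_i$ is a round disc, $\widetilde\Phi$ fails to be smooth on this locus, which meets both $W$ and its complement. The cleanest remedies are either (i) to note that $c_1^{EH}$ is insensitive to removal of closed subsets of codimension at least two, since Hamiltonian trajectories can be perturbed off such loci, or (ii) to smoothly approximate the $W_i$'s by star-shaped domains of the same area, apply the smooth version of the above to the approximations, and pass to the limit via the $C^0$-continuity of $c_1^{EH}$ on star-shaped domains.
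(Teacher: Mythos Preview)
Your strategy---push the problem onto the ball via a global $1$-homogeneous map $\widetilde\Phi$ and then invoke boundary minimality of $B^{2n}(a)$---is different from the paper's, and it has a real gap. The hypothesis on the $W_i$ says only that each radial ray meets $\partial W_i$ once; this makes $\rho_i$ \emph{continuous}, nothing more. Your assertion that $\phi_i(r,\theta)=(r\rho_0/\rho_i(\theta),\,g_i(\theta))$ is ``smooth off the origin'' therefore fails in general: $\phi_i$ is merely an area-preserving homeomorphism, and $\widetilde\Phi$ is not a symplectomorphism anywhere unless the $\rho_i$ happen to be smooth. The step ``symplectic invariance of $c_1^{EH}$ under $\widetilde\Phi$'' is thus unavailable, and the whole transfer argument rests on it. Your two remedies are aimed at the separate (and genuine) codimension-two singularity along $\bigcup_i\{z_i=0\}$, but neither addresses this global low regularity; moreover remedy (ii) runs into the familiar problem that a strict inequality need not survive a limit, and $W\setminus U$ is not star-shaped, so the $C^0$-continuity you invoke does not apply to it.

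The paper sidesteps all of this by never building a global map. Given $U$ meeting $\partial W$ at a point $(z_1,\dots,z_n)$ with all $z_i\neq 0$, it raises each defining function $f_i$ to some $g_i\geq f_i$ on a small arc near $z_i/|z_i|$, so that the resulting domains $W_i'=\{g_i(z/|z|)\,|z|^2\leq 1\}$ all have the same smaller area $a'<a$ and $W\setminus U\subset W_1'\times_2\cdots\times_2 W_n'$. Monotonicity of $c_1^{EH}$ together with a second application of Theorem~\ref{thm:main} then yields $c_1^{EH}(W\setminus U)\leq c_1^{EH}(W_1'\times_2\cdots\times_2 W_n')=a'<a$. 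This uses only monotonicity and Theorem~\ref{thm:main}, requires no regularity of any auxiliary map, and in particular does not import the boundary minimality of the ball (or the identification of spectral and Ekeland--Hofer capacities) as an external ingredient.
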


Note that in the previous proposition, the boundaries of $W_i$ can be highly irregular, and yet one can understand its boundary minimal property. This suggests a possible generalization of the Zoll property to arbitrary star-shaped domains. It seems beneficial to further investigate the boundary minimal property.

\subsection*{Acknowledgements}
We would like to thank Dan Cristofaro-Gardiner, Richard Hind, Alberto Abbondandolo, Oliver Edtmair, and Dylan Cant for useful comments on the earlier version of this paper. F.B. is supported by the Deutsche Forschungsgemeinschaft (DFG, German Research Foundation) – 517480394. S.M. is partially supported by the ISF-NSFC grant 3231/23 and partially supported by the ISF grant 938/22.
 
\section{Proofs}

\subsection{Proof of Theorem \ref{thm:main}}\label{sec:main_proof}
\begin{defn} A compact domain $W \subset \R^{2n}$ star-shaped with respect to origin is \textit{nice} if it has a smooth boundary $\partial W$, and the radial vector field $X = 1/2\sum x_i \partial_{x_i} + y_i \partial_{y_i}$ is transverse to $\partial W$.

\end{defn}

\begin{lemma}\label{lma:ham}
Let $W \subset \R^2$ be a nice star-shaped domain with area $a$. There exists a $1$-homogeneous Hamiltonian isotopy $\Phi^t_{H_t}$ on $\R^{2} \setminus \{0\}$ such that $\Phi^1_{H_t} (\mathbb{D}(a) \setminus \{0\}) = W \setminus \{0\}$
\end{lemma}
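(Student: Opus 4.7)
The plan is to work in symplectic polar coordinates $(I,\theta)$ on $\R^2\setminus\{0\}$, with $I=r^2/2$ so that $\omega_0 = dI \wedge d\theta$. Niceness of $W$ allows us to write $\partial W$ as a smooth radial graph $\{I=g(\theta)\}$ for a smooth, $2\pi$-periodic, positive function $g$, and the area condition reads $\int_0^{2\pi} g(\theta)\,d\theta = a$. In these coordinates $\mathbb{D}(a) = \{I \leq a/(2\pi)\}$.

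First, I interpolate the radial profiles by $g_t(\theta) := (1-t)\,a/(2\pi) + t\,g(\theta)$, a smooth family of positive $2\pi$-periodic functions with $\int_0^{2\pi} g_t = a$ for all $t \in [0,1]$. For each $t$, I produce a $1$-homogeneous symplectomorphism $\Psi_t$ of $\R^2\setminus\{0\}$ with $\Psi_t(\{I=a/(2\pi)\}) = \{I=g_t(\theta)\}$, via the ansatz
\[
\Psi_t(I,\theta) = \left( I \cdot \tfrac{2\pi}{a}\,g_t(\alpha_t(\theta)),\ \alpha_t(\theta) \right).
\]
The requirement that $\Psi_t^*(dI\wedge d\theta)=dI\wedge d\theta$ reduces to the separable ODE $\alpha_t'(\theta) = (a/(2\pi))/g_t(\alpha_t(\theta))$, whose solution is $2\pi$-periodic precisely because $\int_0^{2\pi} g_t = a$; hence $\alpha_t$ descends to a diffeomorphism of $S^1$ and $\Psi_t$ is globally defined on $\R^2\setminus\{0\}$. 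One-homogeneity in $(x,y)$ is automatic, since $\Psi_t$ scales $I$ by a factor depending only on $\theta$ (and $I$ scales with degree $2$ under the radial dilation).

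To upgrade the smooth family $(\Psi_t)$ into a Hamiltonian isotopy, I consider the generating vector field $V_t := (\partial_t \Psi_t)\circ \Psi_t^{-1}$, which is symplectic and $1$-homogeneous because each $\Psi_t$ is, i.e.\ $[X,V_t]=0$. The form $\iota_{V_t}\omega_0$ is closed, and from $\mathcal{L}_X\omega_0=\omega_0$ together with $[X,V_t]=0$ one obtains $\mathcal{L}_X(\iota_{V_t}\omega_0) = \iota_{V_t}\omega_0$. Cartan's magic formula then yields
\[
\iota_{V_t}\omega_0 \;=\; \mathcal{L}_X(\iota_{V_t}\omega_0) \;=\; d\bigl(\iota_X \iota_{V_t}\omega_0\bigr),
\]
so $H_t := \omega_0(V_t,X)$ is an explicit Hamiltonian generating $V_t$, and the same identity gives $\mathcal{L}_X H_t = H_t$, that is, $H_t$ is $2$-homogeneous on $\R^2\setminus\{0\}$. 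Consequently $\Phi^t_{H_t} = \Psi_t$ is a $1$-homogeneous Hamiltonian isotopy satisfying $\Phi^1_{H_t}(\mathbb{D}(a)\setminus\{0\}) = W\setminus\{0\}$.

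The step I expect to be the main obstacle is turning the symplectic family $\Psi_t$ into a genuine Hamiltonian flow: $\R^2\setminus\{0\}$ has nontrivial $H^1$, so a priori a symplectic vector field need not be Hamiltonian. The Cartan-formula plus homogeneity argument above is precisely what bypasses this obstruction, producing an explicit primitive using the radial vector field $X$.
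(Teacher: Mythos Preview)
Your proof is correct and rests on the same underlying fact as the paper's: $1$-homogeneous symplectomorphisms of $\R^2\setminus\{0\}$ correspond to diffeomorphisms of $S^1$, and the equal-area condition lets one isotope the relevant diffeomorphism to the identity. The paper packages this via the symplectization $S^1(a)\times(0,\infty)$: Moser's trick on $S^1$ produces a contact isotopy $\varphi^t$ with $(\varphi^1)^*(h_W\alpha)=\alpha$, and its contact Hamiltonian $f_t=\alpha(X_t)$ lifts tautologically to the Liouville-equivariant Hamiltonian $H_t(x,r)=f_t(x)r$. You instead work directly in symplectic polar coordinates, write down the family $\Psi_t$ explicitly (your ODE for $\alpha_t$ \emph{is} the Moser equation on $S^1$), and then extract the Hamiltonian via the Cartan identity $\iota_{V_t}\omega_0 = d(\iota_X\iota_{V_t}\omega_0)$. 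This last step is a nice self-contained way to bypass the $H^1(\R^2\setminus\{0\})$ obstruction without invoking the contact-Hamiltonian/symplectization dictionary, and it makes the $2$-homogeneity of $H_t$ fall out of the same computation; the paper's route is more conceptual but assumes familiarity with that dictionary.
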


\begin{proof}

 Let $S^1(a ) := \partial \mathbb{D}(a )$ be a boundary of the disk of area $a $, and let $\alpha = \lambda_{0} \vert_{S^1(a )} $, where $\lambda_0 = 1/2(x dy - y dx)$. 
Consider the symplectization $S^1(a ) \times (0, +\infty)$ of $(S^1(a ), \alpha )$ with a symplectic form $d(r \alpha)$. There is a symplectomorphism $S:\R^2 \setminus \{0\} \to S^1(a ) \times (0, +\infty)$ given by: $$r e^{i \theta} \mapsto (\sqrt{a  / \pi} e^{i \theta}, \pi/a  r^2).$$ Symplectomorphism $S$ maps $\partial W $ to $\{(x, h_{W }(x)) \mid x \in S^1(a )\}$ for the unique positive smooth function $h_{W }:S^1(a ) \to \R$, i.e., $S$ induces a strict contactomorphism from $(\partial W , \lambda_0\vert_{W })$ to $(S^1(a ), h_{W } \alpha  )$. 

Since $W $ and $\mathbb{D}(a )$ have the same area, $(S^1(a ), \alpha )$ has the same length as  $(S^1(a ), h_{W} \alpha)$. By Moser's trick, there is a volume preserving isotopy $\varphi^t$ on $S^1(a )$ such that $(\varphi^1)^* (h_W  \alpha ) = \alpha $. This isotopy is generated by the vector field $X_t $ on $S^1(a )$ with the contact Hamiltonian $f_t : S^1(a ) \to \R$, defined by $f_t (x) = \alpha (X_t)$. Now, each contact Hamiltonian $f_t$ gives rise to a Liouville equivariant Hamiltonian $H_t (x,r) = f_t (x) r$ on the symplectization whose flow is $1$-homogeneos lift $\Phi^t_{H_t}$ of the contact isotopy generated by $X_t$. After identifying the symplectization with $\R^2 \setminus \{0\}$, $\Phi^t_{H_t}$ can be extended continously on $\R^2$ to $0$ by setting $\Phi^t_{H_t}(0)=0$. Homeomorphism $\Phi^1_{H_t}$ maps $\mathbb{D}(a )$ to $W $.
\end{proof}

\begin{lemma}\label{lma:smooth} Let $W_1, \dots, W_n$ be nice star-shaped domains with respective areas $a_1, \dots, a_n$. For every $ \epsilon > 0$ small enough there exists a symplectomorphism $\Psi(z_1,\dots,z_n) = (\psi_1(z_1), \dots, \psi_n(z_n))$ such that  $\psi_i(0) = 0$ for every $i\in \{1,\dots,n\}$ and: 
$$(1-\epsilon) (W_1 \times_2 W_2 \times_2 \cdots \times_2 W_n ) \subset \Psi (E(a_1, \dots, a_n))\subset (1+\epsilon) (W_1 \times_2 W_2 \times_2 \cdots \times_2 W_n).$$
\end{lemma}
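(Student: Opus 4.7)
The plan is to smoothly truncate the $1$-homogeneous Hamiltonians from Lemma \ref{lma:ham} near the origin and then translate the resulting $C^0$-bounds into the desired inclusions via Minkowski functionals. First, applying Lemma \ref{lma:ham} to each $W_i$ produces a time-dependent Hamiltonian $H^{(i)}_t$ on $\R^2\setminus\{0\}$, degree-$2$ homogeneous in $z$, whose time-$1$ flow $\Phi_i$ is the $1$-homogeneous symplectomorphism sending $\mathbb{D}(a_i)\setminus\{0\}$ onto $W_i\setminus\{0\}$. Since $X_{H^{(i)}_t}$ is continuous and $1$-homogeneous, a Grönwall estimate yields constants $0<m_i\leq M_i$ with $m_i|z|\leq|\Phi^t_i(z)|\leq M_i|z|$ for all $z$ and $t\in[0,1]$. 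For $\delta>0$, pick a smooth cutoff $\rho_\delta:[0,\infty)\to[0,1]$ with $\rho_\delta\equiv 0$ on $[0,\delta]$, $\rho_\delta\equiv 1$ on $[2\delta,\infty)$, and $|\rho_\delta'|\leq C/\delta$, and set $\tilde{H}^{(i)}_t(z):=\rho_\delta(|z|)H^{(i)}_t(z)$. This extends smoothly to all of $\R^2$ and vanishes near $0$, so its time-$1$ flow $\psi_i$ is a smooth symplectic diffeomorphism of $\R^2$ fixing $0$. For $|z|\geq 2\delta/m_i$ the trajectories of $\tilde{H}^{(i)}_t$ stay inside $\{|z|\geq 2\delta\}$, where $\tilde{H}^{(i)}_t=H^{(i)}_t$, hence $\psi_i(z)=\Phi_i(z)$; on the complementary region, combining $|H^{(i)}_t|\lesssim|z|^2$, $|X_{H^{(i)}_t}|\lesssim|z|$, and $|X_{\rho_\delta(|z|)}|\lesssim 1/\delta$ via the Leibniz rule $X_{\rho_\delta H^{(i)}_t}=\rho_\delta X_{H^{(i)}_t}+H^{(i)}_t X_{\rho_\delta(|z|)}$ gives $|X_{\tilde{H}^{(i)}_t}|\lesssim\delta$, whence $|\psi_i(z)|\lesssim\delta$ there as well.

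Next, I would rewrite the sets in terms of Minkowski functionals. Let $r_{W_i}$ denote the gauge of $W_i$; since $W_i$ is nice, $c_i|z|\leq r_{W_i}(z)\leq C_i|z|$ for positive constants $c_i,C_i$. A direct verification from the definitions of the $2$-product gives
\[
E(a_1,\dots,a_n)=\left\{z:\sum_i\pi|z_i|^2/a_i\leq 1\right\},\qquad W_1\times_2\cdots\times_2 W_n=\left\{z:\sum_i r_{W_i}(z_i)^2\leq 1\right\},
\]
with scaling by $\lambda$ replacing the bounds on the right by $\lambda^2$. By $1$-homogeneity, $r_{W_i}(\Phi_i(z))=\sqrt{\pi/a_i}\,|z|$, so on $\{|z_i|\geq 2\delta/m_i\}$ one has $r_{W_i}(\psi_i(z_i))^2=\pi|z_i|^2/a_i$ exactly, while on the complementary region both quantities are $O(\delta^2)$ by the first paragraph together with $r_{W_i}(\psi_i(z_i))\leq C_i|\psi_i(z_i)|$. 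Summing,
\[
\left|\sum_i r_{W_i}(\psi_i(z_i))^2-\sum_i\pi|z_i|^2/a_i\right|\leq K\delta^2
\]
uniformly in $z$, with $K$ independent of $\delta$. Setting $\Psi:=\psi_1\times\cdots\times\psi_n$ and choosing $\delta$ so that $K\delta^2<\min\{2\epsilon-\epsilon^2,\,(1+\epsilon)^2-1\}$, applying this estimate to $z\in E$ and to $\Psi^{-1}(w)$ for $w\in(1-\epsilon)(W_1\times_2\cdots\times_2 W_n)$ yields both inclusions.

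The step I expect to require the most care is the annulus bound $|X_{\tilde{H}^{(i)}_t}|\lesssim\delta$ on $\{\delta\leq|z|\leq 2\delta/m_i\}$: the dynamics there is governed neither by the homogeneous Hamiltonian nor by the identity, and the error term $H^{(i)}_t X_{\rho_\delta(|z|)}$ carries a factor $1/\delta$ from the derivative of the cutoff. The required cancellation with the $|z|^2$-growth of $H^{(i)}_t$ must hold uniformly in $\delta$, so that the constant $K$ in the final estimate does not deteriorate as $\delta\to 0$.
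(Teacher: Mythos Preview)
Your argument is correct and follows the same strategy as the paper: truncate the $1$-homogeneous Hamiltonians of Lemma \ref{lma:ham} near the origin, so that each $\psi_i$ agrees with the homogeneous flow $\Phi_i$ outside a disc of radius $O(\delta)$ and is $O(\delta)$-small inside it, and then translate this into the two inclusions. The only real difference is in bookkeeping: the paper writes points of $E(a_1,\dots,a_n)$ in parametric form $z_i=\lambda_i\zeta_i$ with $\zeta_i\in\mathbb{D}(a_i)$ and $\sum\lambda_i^2=1$, splitting coordinates into ``small'' ($\pi|z_i|^2<\delta_i$) and ``large'' ones, whereas you phrase everything through the Minkowski functionals $r_{W_i}$ and obtain the single uniform estimate $\bigl|\sum_i r_{W_i}(\psi_i(z_i))^2-\sum_i\pi|z_i|^2/a_i\bigr|\leq K\delta^2$. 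Your Gr\"onwall treatment of the transition annulus is in fact more careful than the paper's, which asserts $\psi_i(\mathbb{D}(\delta_i))\subset\epsilon' W_i$ and ``$\psi_i$ is $1$-homogeneous on $\{\pi|z_i|^2\geq\delta_i\}$'' without tracking where trajectories of the truncated flow actually stay; your worry about the annulus bound is well placed, and your Leibniz computation $|X_{\tilde H^{(i)}_t}|\lesssim |z|$ globally (hence $|\psi_i(z)|\leq e^{C}|z|$ by Gr\"onwall) is exactly what closes it.
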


\begin{proof}

For each $W_i$ let $H_t^i$ be the Hamiltonians that generate the $1$-homogeneous flows from Lemma \ref{lma:ham}. Fix $0 <\epsilon <1$ arbirtrarily small, and let $\delta_i>0$ be such that: $\Phi^t_{H^i}(\mathbb{D}(\delta_i) ) \subset \epsilon' W_i,$ where $\epsilon' < \sqrt{\epsilon/n}$. Note that Hamiltonian isotopy $\Phi^t_{H^i}$ of $\R^2 \setminus \{0\}$ was extended continuously to $\R^2$ by $\Phi^t_{H^i}(0) = 0$. Define the time dependent Hamiltonian $H_t: \R^{2n} \to \R$ by: 
$$
H_t(z_1, \dots., z_n) = \sum_{i=1}^n \rho_i(|z_i|^2) H_t^i(z_i),
$$
where $\rho_i :\R \to [0,1]$ is a smooth cut-off function such that $\rho_i \equiv 1$ for $t \geq \delta_i / \pi$, and $\rho_i \equiv 0$ near $0$. Let $\Phi^t_{H}$ be the Hamiltonian flow of $H_t$, and set $\Psi:= \Phi^1_{H}$.
\begin{itemize}
\item $\Psi(E(a_1, \dots, a_n)) \subset (1+\epsilon) W_1 \times_2 \cdots \times_2 W_n$.

 W.l.o.g we can assume that $\pi |z_i|^2 < \delta_i$ for $1 \leq i \leq k$, and $\pi |z_i|^2 \geq \delta_i$ for $i \geq k+1$ for $0 \leq k \leq n$. For $ z \in E(a_1, \dots, a_n)$ there are $\lambda_i \in [0,1]$, and there are $\zeta_i \in \mathbb{D}(a_i)$ such that $z = (\lambda_1 \zeta_1, \dots, \lambda_n\zeta_n)$ and $\lambda_1^2 + \cdots +\lambda_n^2 = 1$. We have: $$\Psi(z) = (\psi_1(\lambda_1 \zeta_1), \dots, \psi_n(\lambda_n \zeta_n)) \in \epsilon' W_1 \times \epsilon' W_2 \times \cdots \times \epsilon' W_k \times  \lambda_{k+1} W_{k+1} \times \cdots \times \lambda_{n}  W_{n},
$$
where we have used that $\psi_i(\mathbb{D}(\delta_i)) \subset \epsilon' W_i$ and $\psi_i$ is $1$ - homogeneous on the set $\pi |z_i|^2 \geq \delta_i$.

From inequality: $$k \epsilon'^2 + \lambda_{k+1}^2 + \cdots \lambda_{n}^2 \leq \epsilon+ 1,$$ we get: $$\epsilon' W_1 \times \cdots \times \epsilon' W_k \times  \lambda_{k+1} W_{k+1} \times \cdots \times \lambda_{n}  W_{n}  \subset (1+\epsilon) (W_1 \times_2 \cdots \times_2 W_n).$$

\item $(1 - \epsilon) W_1 \times_2 \cdots \times_2 W_n \subset \Psi(E(a_1, \dots, a_n))$.

For $w \in (1-\epsilon) (W_1 \times_2 \cdots \times_2 W_n)$, there are $\lambda_i \in [0,1]$ and $w_i \in W_i$ such that $w = (1-\epsilon) (\lambda_1 w_1, \dots, \lambda_n w_n)$ and $\lambda_1^2 + \cdots +\lambda_n^2 =1$. Since $\psi_i(\mathbb{D}(a_i)) = W_i$, let $z_i$ be such that $\psi_i(z_i) = (1-\epsilon) \lambda_i w_i$. When $\pi |z_i|^2 \geq \delta_i $ we have $\frac{\pi |z_i|^2}{a_i} \leq (1- \epsilon)^2 \lambda_i^2$ because we are in the region where $\psi_i$ are $1$-homogeneous, and $\psi_i$ preserves the area. More precisely, $\psi_i(\mathbb{D}(\pi |z_i|^2)) \subseteq (1-\epsilon) \lambda_i W_i$, and area of $(1-\epsilon) \lambda_i W_i$ is $(1-\epsilon)^2\lambda_i^2 a_i$. In other case, we have $\pi |z_i|^2 < \delta_i$, but $\delta_i < a_i \epsilon'^2$ since $\psi_i(\mathbb{D}(\delta_i)) \subset \epsilon' W_i$. We conclude that:

$$
\frac{\pi |z_1|^2 }{a_1} + \cdots + \frac{\pi |z_n|^2 }{a_n} \leq n \epsilon'^2 + (1-\epsilon)^2 (\lambda_1^2 + \cdots + \lambda_n^2) < \epsilon + (1-\epsilon)^2 <1,
$$
since $0 < \epsilon < 1$.

In particular, since $\psi_i(0)=0$ for all $i$, we have that $(1-\epsilon) W_i \subset \psi_i(\mathbb{D}(a_i)) \subset (1+\epsilon)W_i$ by setting $z_j=0$ for $j \neq i$.
\end{itemize}

\end{proof}

\begin{proof}[Proof of Theorem \ref{thm:main}]
Fix an increasing sequence $\delta_k$ converging to $1$, and a sequence $\delta_k'$ such that $\delta_k < \delta_k' < \delta_{k+1}$. For each $W_i$, consider an exhausting nested sequence $W_i^k \subset W_i$ of nice star-shaped domains in $\R^2$ with areas $\delta_k a_i$ (see Figure \ref{fig:nested}). 

\begin{figure}[h!]
\centering
\begin{tikzpicture}
\draw[black!30!white, use Hobby shortcut,closed=true]
(0, 1) .. (0.35, 0.35) .. (1, 0) .. (0.35, -0.35) .. (0, -1) .. (-0.35, -0.35) .. (-1, 0) .. (-0.35, 0.35) .. (0,1);
\draw[black!40!white, use Hobby shortcut,closed=true]
(0, 1.15) .. (0.45, 0.45) .. (1.15, 0) .. (0.45, -0.45) .. (0, -1.15) .. (-0.45, -0.45) .. (-1.15, 0) .. (-0.45, 0.45) .. (0,1.15);

\draw[black!50!white, use Hobby shortcut,closed=true]
(0, 1.3) .. (0.55, 0.55) .. (1.3, 0) .. (0.55, -0.55) .. (0, -1.3) .. (-0.55, -0.55) .. (-1.3, 0) .. (-0.55, 0.55) .. (0,1.3);

\draw[black!60!white, use Hobby shortcut,closed=true]
(0, 1.35) .. (0.65, 0.65) .. (1.35, 0) .. (0.65, -0.65) .. (0, -1.35) .. (-0.65, -0.65) .. (-1.35, 0) .. (-0.65, 0.65) .. (0,1.35);

\draw (-0.3, 1.37) -- (0.3, 1.37) -- (0.7, 0.7)-- (1.37, 0.3) -- (1.37, -0.3) -- (0.7, -0.7) -- (0.3, -1.37) -- (-0.3, -1.37) -- (-0.7, -0.7) -- (-1.37, -0.3) -- (-1.37, 0.3) -- (-0.7, 0.7) -- (-0.3, 1.37);

\draw[black!60!white] node at (0,0) {$W_i^k$};
\draw node at (0.7, 0.7) [above right] {$W_i$};

\draw (-4,0) circle (1.25); 
\draw[black!60!white] (-4,0) circle (1.15);
\draw[black!50!white] (-4,0) circle (1.05);
\draw[black!40!white] (-4,0) circle (0.95);
\draw[black!30!white] (-4,0) circle (0.85);
\draw[black!60!white] node at (-4,0) {$\mathbb{D}(\delta_k a_i)$};
\draw node at (-3.2,0.7) [above right] {$\mathbb{D}(a_i)$};
\draw[->] (-2.3,0) -- (-1.8, 0);
\draw node at (-2.05, 0) [above] {$\psi$};
\end{tikzpicture}
\caption{Exhausting nested sequence $W_i^k$.}
\label{fig:nested}
\end{figure}
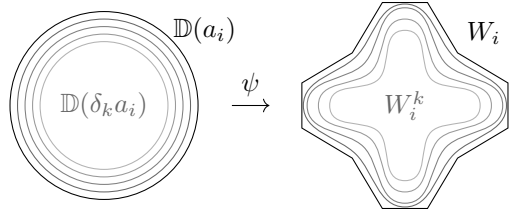

After approaprietely choosing $\epsilon_k$, we can apply Lemma \ref{lma:smooth} to obtain a sequence $\Psi_k = (\psi_1^k, \dots, \psi_n^k)$ of symplectomorphisms such that
$$
W_1^k \times_2 \cdots\times_2 W_n^k \subset \Psi_k(\delta'_k E(a_1, \dots, a_n)) \subset W_1^{k+1}\times_2 \cdots \times_2 W_2^{k+1}.
$$
Now, the space of symplectic embeddings of a disc into a compact connected surface is connected (see, e.g., \cite[Proposition A.A]{AS01}), and every path can be realized by the ambient isotopy. Using $\mathrm{Im} \psi_i^k \subset \mathrm{Im} \psi_i^{k+1}$, we can modify each component $\psi_i^{k+1}$ such that $\psi_i^{k+1} \vert_{\mathbb{D}(\delta_k' a_i)} = \psi_i^k$, and $\psi_i^{k+1}$ remains unchanged near the boundary of $\mathbb{D}(\delta_{k+1}' a_i)$.  We define a symplectomorphism 
$$
\begin{aligned}
\Psi: \mathrm{int}(E(a_1, \dots, a_n)) &\to \mathrm{int}(W_1 \times_2 \cdots \times_2 W_n) \\
z &\mapsto \Psi_k(z),
\end{aligned}
$$
where $k$ is such that $z \in \delta_k E(a_1, \dots, a_n)$. For a similar application of the exhaustion trick, see \cite[Lemma 4.3]{LMS13}.

\end{proof}

\subsection{Proof of Corollary \ref{cor:haus}}
For the definition of Hausdorff dimension, see \cite[Section 3]{Sch07}. It follows from \cite[Theorem 2]{Sch07} and \cite{He71} that Hausdorff dimension has the following properties:
\begin{itemize}
\item If $X \subset Y$ then $\dim_{\mathrm{H}} (X) \leq \dim_{\mathrm{H}} (Y)$.
\item $\dim_{\mathrm{H}} (X \times Y) \geq \dim_{\mathrm{H}} (X) + \dim_{\mathrm{H}} (Y)$.
\item For $X \subset \R^{d}$ we have $\dim_{\mathrm{H}} (X) \leq d$.
\item $\dim_{\mathrm{H}}(X \times \R^d) = \dim_{\mathrm{H}}(X)+d$.
\end{itemize}

Let $W_1$ be a compact star-shaped domain with boundary of Hausdorff dimension $1+\alpha$, and of area $a_1$. If $\alpha<1$ we can construct $W_1$ by taking a Weirestrass function $W_{a,b}(x) = \sum_{n=0}^{+\infty} a^n \cos (2 \pi b^n x)$ for suitable $0<a<1<b$, since the graph of $W_{a,b}$ has Minkowski\footnote{It is conjectured that the Weirstrass function $W_{a,b}$ also has Hausdorff dimension $2+\log a/ \log b$. This is confirmed for certain parameters in \cite{BBR14}.} dimension $2+\log a/ \log b$ (see \cite{KMPY84}). For a domain with Hausdorff dimension $1+\alpha = 2+\log a/ \log b$, we can take a small phase shift of the Weierstrass function as in \cite{Hu98}:

$$
W_{a,b, \theta}(x) = \sum_{n=0}^{+\infty} a^n \cos (2 \pi(b^n x + \theta_n)),
$$ 

for a suitable choice of $\theta_n$. For $\alpha=1$, we can use the function from \cite{XZ07}. The construction is similar to Wierstrass function: $$f(x) = \sum_{n=1}^{\infty} a^{n^{\alpha}} \phi(a^{-n^{\beta}} x),$$
where $0<a<1$, $1<\alpha <\beta$ and $\phi(x)=2x$, for $0 \leq x \leq 1/2$, and it is extended to $\R$ by $\phi(-x)=\phi(x)$, and $\phi(x+1)=\phi(x)$. Let $H_{W_1}$ be a $2$-homogeneous function such that $W_1 = \{H_{W_1} \leq 1\}$. It is given by $H_{W_1} (z) = f(z/|z|) |z|^2$ for a unique function $f:S^1 \to (0, +\infty)$, whose graph has dimension $1+\alpha$. 

Consider $W:=W_1 \times_2 E(a_2,\dots, a_n)$. From Theorem \ref{thm:main}, the interior of $W$ is symplectomorphic to the interior of $E(a_1,\dots,a_n)$. The defyning $2$-homogeneous function for $W$ is:
$$
H_W(z_1,\dots, z_n) = H_{W_1}(z_1) + \frac{\pi |z_2|^2}{a_2} \cdots + \frac{\pi |z_n|^2}{a_n},
$$
and, the boundary of $W$ is given by $H_W^{-1}(1)$. Given any point  on $\vec{z}=(z_1,\dots,z_n) \in \partial W$, we have two cases: $z_n=0$ and $z_n \neq 0$. Since $z_n=0$ cuts the dimension by two, it is enough to consider the case $z_n \neq 0$. We can see a neighborhood of $\vec{z} \in \partial W$ in polar coordinates as a graph of the function:
$$
\begin{aligned}
r_n : U &\to \R,\\
(r_1, \theta_1, r_2, \theta_2, \dots, \theta_{n-1}, \theta_n) &\mapsto \frac{a_n}{\pi} \sqrt{1- f(e^{i\theta_1})r_1^2 - \cdots - \frac{\pi r_{n-1}^2}{a_{n-1} }},
\end{aligned}
$$

where $U \subset \R^{2n-1}$. Since the square root is smooth outside of the origin, the dimension of the graph is determined by:

$$
1- f(e^{i\theta_1})r_1^2 - \cdots  - \frac{\pi r_{n-1}^2}{a_{n-1} }.
$$
Now, for a smooth function $g$ and any $h$, we have $\dim_{\mathrm{H}}(\Gamma_h) = \dim_{\mathrm{H}}(\Gamma_{h+g})$, where $\Gamma_{h}$ is the graph of a function. The reason is that the map $(x,y) \mapsto (x, y + g(x))$ is a diffeomorphism, and hence, it preserves the Hausdorff dimension. Similar arguments apply for the product with a positive smooth function, i.e., for a positive smooth function $g$ and any $f$, we have $\dim_{\mathrm{H}}(\Gamma_h) = \dim_{\mathrm{H}}(\Gamma_{hg})$. Lastly, since we have reduced the question to the Hausdorff dimension of the graph of $f$ seen as a function of $2n-1$ variables, we appeal to the propery $\dim_{\mathrm{H}}(X \times \R^d) = \dim_{\mathrm{H}}(X)+d$ from \cite{He71}.  Applying these two observations, we conclude that the Hausdorff dimension of $\partial W$ is $2n-1+\alpha$. Note that he proof is valid in the region $r_1 > 0$, but $r_1=0$ cuts the dimension by $2$, hence the conclusion remains.


\subsection{Proof of Theorem \ref{thm:domain}}

For the proof, we will need the following lemma. Let $U$ be an open set, compactly contained in an open set $W$, where $W \subset (M, \omega)$ has a compact closure. Let $H: M \times [0,1] \to \R$, and $G: M \times [0,1]$ be two Hamiltonians, and let $\Phi^t_G$ and $\Phi^t_H$ be the flows that are generated by $G_t$ and $H_t$. 

\begin{lemma}\label{lma:maksim}
If $\Phi^t_H(\Bar{U}) \subset \Phi^t_G (W)$ for all $t \in [0,1]$ then there exists $\widetilde{G}_t: M \to \R$ such that:
\begin{itemize}
\item $\Phi^t_{\widetilde{G}}(x) = \Phi^t_{H}(x)$, for all $x \in U$,
\item $\Phi^t_{\widetilde{G}}(x) = \Phi^t_G(x)$, for all $x \in W^{\mathrm{c}}$.
\end{itemize}
\end{lemma}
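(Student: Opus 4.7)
The plan is to interpolate between $H$ and $G$ via a cut-off function that is jointly smooth in $(t,x)$ and equals $1$ on (a neighborhood of) the moving compact set $\Phi^t_H(\bar U)$ and $0$ off (a slight enlargement of) $\Phi^t_G(W)$. More precisely, I would consider the two subsets
\[
A = \{(t,x) \in [0,1] \times M : x \in \Phi^t_H(\bar U)\}, \qquad B = \{(t,x) \in [0,1] \times M : x \notin \Phi^t_G(W)\}.
\]
The first is compact, being the image of the compact set $[0,1] \times \bar U$ under the continuous map $(t,x) \mapsto (t,\Phi^t_H(x))$; the second is closed, being the preimage of $[0,1] \times W^c$ under $(t,x) \mapsto (t, (\Phi^t_G)^{-1}(x))$. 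By hypothesis $A \cap B = \emptyset$. On the paracompact manifold $[0,1] \times M$, a smooth Urysohn construction (partition of unity) then yields $\rho \in C^\infty([0,1] \times M, [0,1])$ with $\rho \equiv 1$ on an open neighborhood of $A$ and $\rho \equiv 0$ on an open neighborhood of $B$.

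With this in hand, I would set
\[
\widetilde G_t(x) := \rho(t,x)\,H_t(x) + (1 - \rho(t,x))\,G_t(x),
\]
and verify the two required properties by an ODE uniqueness argument. For $x \in U$, the curve $\gamma_H(t) = \Phi^t_H(x)$ lies in $A$ for all $t$, so in a neighborhood of its graph one has $\widetilde G_t \equiv H_t$ and hence $X_{\widetilde G_t} = X_{H_t}$. The two curves $\Phi^t_H(x)$ and $\Phi^t_{\widetilde G}(x)$ start at the same point and satisfy the same ODE as long as the $\widetilde G$-trajectory stays in this neighborhood; a standard open-and-closed bootstrap on $[0,1]$ shows they coincide throughout. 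For $x \in W^c$, since $\Phi^t_G$ is a diffeomorphism, the curve $\Phi^t_G(x)$ lies in $B$ for all $t$, where $\widetilde G_t \equiv G_t$, and the same uniqueness argument gives $\Phi^t_{\widetilde G}(x) = \Phi^t_G(x)$.

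The main (and only genuine) obstacle is producing the cut-off $\rho$ as a single smooth function of both $t$ and $x$, rather than a family of cut-offs chosen separately for each $t$. This requires observing that $A$ is \emph{compact} (not merely closed slice-wise), which is where the compactness of $\bar U$ and $[0,1]$, together with the hypothesis that $\bar W$ is compact, enters in an essential way; once $A$ is compact and disjoint from the closed set $B$, smooth Urysohn on $[0,1]\times M$ delivers $\rho$ and the rest is routine Hamiltonian-flow bookkeeping.
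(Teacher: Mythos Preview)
Your argument is correct, but it differs from the paper's in a structurally interesting way. The paper first conjugates by $\Phi^t_G$: writing $\Phi^t_H = \Phi^t_G \circ \big((\Phi^t_G)^{-1}\circ \Phi^t_H\big)$, the hypothesis becomes $(\Phi^t_G)^{-1}\circ \Phi^t_H(\bar U)\subset W$ for all $t$, so the union over $t$ of these sets is a compact subset of $W$ and one may choose a \emph{time-independent} cut-off $\rho:M\to[0,1]$ equal to $1$ there and $0$ off a slightly larger set inside $W$. One then sets $\widetilde G_t = G_t\,\#\,\big(\rho\,(\bar G_t\,\#\,H_t)\big)$ using the standard composition calculus for Hamiltonians; the flow is then explicitly $\Phi^t_{\widetilde G}=\Phi^t_G\circ \Phi^t_{\rho(\bar G\# H)}$, from which both required identities are read off directly from the support of $\rho$, with no ODE bootstrap needed.

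Your route instead keeps the original frame and builds a time-dependent cut-off on $[0,1]\times M$ via smooth Urysohn, then takes the convex combination $\widetilde G_t=\rho H_t+(1-\rho)G_t$. This is more elementary in that it avoids the $\#$-calculus entirely, at the cost of needing $\rho$ to depend on $t$ and of invoking an open--closed continuation argument for the flows. One small point worth making explicit in your write-up: since $\rho$ vanishes on a neighborhood of $B$, the difference $\widetilde G_t-G_t$ is supported in $\Phi^t_G(\bar W)$ for each $t$, so completeness of the $\widetilde G$-flow follows from that of the $G$-flow; this justifies speaking of $\Phi^t_{\widetilde G}$ globally before running the bootstrap.
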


\begin{proof}
Represent $\Phi^t_{H}$ as $\Phi^t_{H} = \Phi^t_G \circ ((\Phi^t_G)^{-1} \circ \Phi^t_H)$. Since $\Phi^t_H(\Bar{U}) \subset \Phi^t_G (W)$ we have that $\bigcup_t  (\Phi^t_G)^{-1} \circ \Phi^t_H (\Bar{U}) \subset W$, i.e., there are open sets $\Bar{U}' \subset W'$ such that: $$U \subset \bigcup_t  (\Phi^t_G)^{-1} \circ \Phi^t_H (\Bar{U}) \subset U'.$$ Set: $$\widetilde{G}_t = G_t \# ( \rho \Bar{G_t} \# H_t),$$
where $\rho$ is a cut-off function which is equal to $1$ on $\bigcup_t  (\Phi^t_G)^{-1} \circ \Phi^t_H (\Bar{U})$, and $\rho \equiv 0$ on $W'^{\mathrm{c}}$. Here $F\#K $ is a Hamiltonian that generates $\Phi^t_F \circ \Phi^t_K$, and $\Bar{F}$ is a Hamiltonian that generates $(\Phi^t_F)^{-1}$. The flow $\Phi^t_{\widetilde{G}}$ satisfies requirements of the lemma, since $\widetilde{G}_t$ coincides with $G_t$ outside of $W$ and for all $x \in U$ we have $\Phi^t_{\widetilde{G}}(x) = \Phi^t_H(x)$ by construction.
\end{proof}

The proof of Theorem \ref{thm:domain} is divided into two parts. The first part brings a smooth part of the boundary on the boundary of the ball, and the second part inserts a symplectomorphism $\psi: \mathrm{int} (B^{2n}(\epsilon) )\to \mathrm{int} (W)$ which does not change the ball outside of the neighborhood $V$ of $p \in \partial U$, and the $\partial (\psi(V) \cap W)$ has Hausdorff dimension $2n$.

\textbf{Part 1:} Let $\vec{n}$ be the unit outer normal to $\partial U$ at $p$. Consider a point $q \in U$ such that $\vec{n}=\sqrt{\delta / \pi}(\vec{p} - \vec{q})$ and half open segment satisfies $[q,p) \subset U$. Without loss of generality, we can assume that $q= 0$. Now, take the ball $B^{2n}( \delta)$ whose boundary is tangent $\partial U$ at $p$. From our choices, we conclude that there is $\epsilon < \delta$ so that $B^{2n}(\epsilon) \subset U$. Using the standard Liouvile form $\lambda_0$ on $B^{2n}(2\delta)$, there is a neighborhood $\mathcal{V} \subset \partial U$ of $p$ so that Liouville vector field\footnote{For a Liouville domain $(W, d\lambda)$, the Liouville vector field $X$ is a unique vector field such that $i_X d\lambda = \lambda $.} $X(x,y)=1/2 \sum_i x_i \partial_{x_i} + y_i \partial_{y_i}$ is transverse to $\mathcal{V}$ since: $$X(p) =\frac{1}{2}\vec{p}  =  \frac{1}{2} \sqrt{\frac{\pi}{\delta}}  \vec{n}.$$ By the contact Darboux theorem, we know that there is a neighborhood $\mathcal{V}' \subset \mathcal{V}$, a neighborhood $\mathcal{W}' \subset \partial B^{2n}( \epsilon)$ of $ \sqrt{\epsilon / \pi} p$ and a strict contactomorphism $\phi: (\mathcal{W}' , \lambda_0)\to (\mathcal{V}', \lambda_0)$, and $\phi( \sqrt{\epsilon / \pi} p) = p$. 

From the proof of the Darboux theorem, this contactomorphism can be realized as the isotopy $\varphi^t : \mathcal{W}' \to \mathcal{W'}$ that fixes $\sqrt{\epsilon / \pi} p$, such that on a smaller set $\mathcal{W}''$ we have $(\varphi^1)^* e^f \alpha_0 = \alpha_0$, where $e^f \alpha_0 =\phi^{*} \lambda_0$ and $\alpha_0 = \lambda_0 \vert_{\mathcal{W'}}$ for some $f: \mathcal{W'} \to \R$. Let $h_t$ be the contact Hamiltonian which generates $\varphi^t$. By cutting off $h_t$ inside $\mathcal{W}'$, and outside of $\mathcal{W}''$ we obtain a $1$-homogeneous Hamiltonian isotopy $\Phi^t_{H}$ on $\R^{2n} \setminus \{0\}$, generated by $H_t(x,r) = h_t(x) r$.  It satisfies $\Phi^1_{H}(\mathcal{W}'') = \mathcal{V}''$ where $\mathcal{W}'' \subset \mathcal{W}' \subset \partial B^{2n}(\epsilon)$ and $\mathcal{V}'' \subset \mathcal{V}' \subset \partial U$. Now, replace $U$ with $(\Phi^1_{H})^{-1}(U)$. In particular, a part of our boundary is a part of a standard contact sphere of radius $\sqrt{\epsilon/ \pi}$.

\textbf{Part 2:}  Take a neighborhood $B^{2n}(p, \epsilon')$ of point $p \in \partial U \cap \partial B^{2n}(\epsilon)$ so that $B^{2n}(p, 2\epsilon')$ is contained in the symplectization $\mathcal{W}'' \times (0, +\infty) \subset \R^{2n}$, where $\mathcal{W}'' \subset \partial U \cap \partial B^{2n}(\epsilon)$. Let $\rho:B^{2n}(\epsilon) \to [0,1]$ be a cutt-off function so that $\rho \equiv 1$ on $B^{2n}(p, \epsilon') \cap B^{2n}(\epsilon)$ and $\rho \equiv 0$ outside of $B^{2n}(p, 2\epsilon')$ in $B^{2n}(\epsilon)$. From the proof of Theorem \ref{thm:main} we see that $\Psi_{k}$ is realized by Hamiltonian isotopies generated by $H_t^k: \R^{2n} \to \R$, after apealing again to the connectednes of the symplectic embeddings of a disc into connected compact surface, and by Lemma \ref{lma:maksim} we have $H_t^{k+1} \vert_{ \Phi^t_{H_k}(\delta_k' B^{2n}(\epsilon) )} = H_t^{k}\vert_{ \Phi^t_{H_k} (\delta_k' B^{2n}(\epsilon))}$. Now, modify $H_t^{k}$ by setting

$$
\widetilde{H}_t^{k}(x) = \rho((\Phi^t_{H_k})^{-1}(x)) H_t^{k}(x).
$$

Note that $\widetilde{H}_t^{k} \vert_{\delta_k' B^{2n}(\epsilon)}$ is zero if $\delta_k' B^{2n}(\epsilon) \cap B^{2n}(p, 2\epsilon') = \emptyset$. Now, define the symplectic embedding:
$$
\begin{aligned}
\Phi: \mathrm{int}(B^{2n}(\epsilon)) &\to \R^{2n} \\
\delta_k'B^{2n}(\epsilon) \ni x &\mapsto \Phi^1_{\widetilde{H}_k}(x).
\end{aligned}
$$ 

 On $B^{2n}(p, \epsilon') \cap \mathrm{int} B^{2n}(\epsilon)$, $\Phi$ coincides with $\Psi_k = \Phi^1_{H_k}$, and it is identity on $\mathrm{int} (B^{2n}(\epsilon)) \setminus B^{2n}(p, 2\epsilon')$. 

Since the part of the boundary of $\Phi^1_{H_t}(B^{2n}(\epsilon))$ coincides with the part of the boundary of $W_1 \times_2 \cdots \times_2 W_n = \Psi(B^{2n}(\epsilon))$ we finish the proof by taking domains $W_i$ so that each point of the boundary $\partial(W_1 \times_2 \cdots \times_2 W_n)$ has neighborhood of Hausdorff dimension $2n-1 + \alpha$.

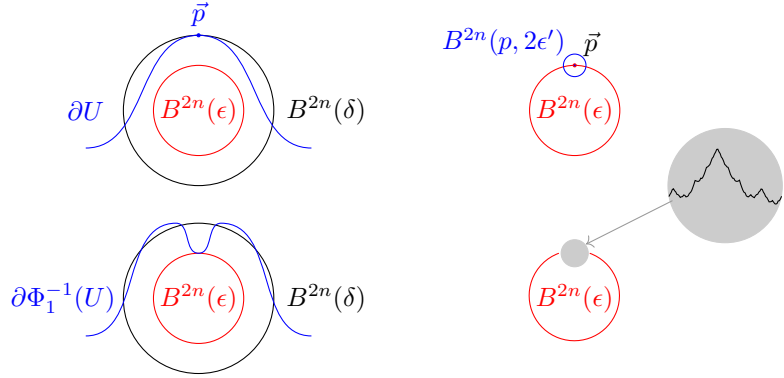
\begin{figure}[h!]
\centering
\begin{tikzpicture}
\draw (-2,2) circle (1cm);
\draw[red] (-2,2) circle (0.6 cm);
\draw[blue] (-3.5,1.5) to [out =0, in = 180] (-2, 3) to [out = 0, in = 180] (-0.5,1.5);
\node[red] at (-2,2) {$B^{2n}(\epsilon)$};
\node[blue] at (-3.5,2) {$\partial U$};
\node at (-0.3, 2) {$B^{2n}(\delta)$};

\draw (-2,-0.5) circle (1cm);
\draw[red] (-2,-0.5) circle (0.6 cm);
\draw[blue] (-3.5,-1) to [out =0, in = 180] (-2.3, 0.5) to [out =0, in = 180] (-2, 0.1)  to [out = 0, in = 180] (-1.7, 0.5)to [out = 0, in = 180] (-0.5,-1);
\node[red] at (-2,-0.5) {$B^{2n}(\epsilon)$};
\node[blue] at (-3.8,-0.5)  {$\partial \Phi_1^{-1}(U)$};
\node at (-0.3, -0.5) {$B^{2n}(\delta)$};
\draw[blue, fill=blue] (-2,3) circle (0.2mm);
\node[blue] at (-2,3) [above] {$\vec{p}$};

\draw[red] (3,2) circle (0.6 cm);
\node[red] at (3,2) {$B^{2n}(\epsilon)$};
\draw[red, fill=blue] (3,2.6) circle (0.2mm);
\node at (3,2.6) [above right] {$\vec{p}$};
\draw[blue] (3,2.6) circle (0.15cm);
\draw[blue] node at (3,2.6) [above left] {$B^{2n}(p, 2\epsilon')$};

\draw[red] (3.2,0.1) arc (70:-250:0.6);
\node[red] at (3,-0.5) {$B^{2n}(\epsilon)$};
\draw[black!20!white, fill=black!20!white] (5, 1) circle (7.6mm);
\draw[->, black!40!white] (4.3, 0.8) -- (3.15, 0.22);
\draw[black!20!white, fill=black!20!white] (3, 0.1) circle (1.8mm);
\begin{scope}[scale = 1.5, shift = {(4.835, 1)}]
\draw[domain=-2:-1, samples=100, /pgf/fpu, /pgf/fpu/output format=fixed] 
        plot (\x, {\weierstrass});
\end{scope}

\end{tikzpicture}
\caption{On left: Hamiltonian deformation of the neigborhood of $\vec{p}$ to the boundary of $B^{2n}(\epsilon)$. On the right: cutting off $\Psi:\mathrm{int}(B^{2n}(\epsilon)) \to \mathrm{int}(W_1\times_2 \cdots \times_2 W_n)$ in a neighborhood of a point.}
\end{figure}

\subsection{Proof of Proposition \ref{prop:dyn}}

For a locally Lipschitz function $f: \R^{m} \to \R$ generalized gradient is well defined at every point $x$, which we denote by $\partial f (x)$ (see \cite{Cl83}). A generalized gradient is a non-empty convex compact subset of $\R^m$, moreover, $\partial f(x) = \{ \nabla f(x)\}$ when $f$ is differentiable at $x$. Let $\Omega \subset \C$ be a star-shaped set with Lipschitz boundary. We consider the characteristic equation:
$$
\begin{aligned}
\gamma'(t) &\in i \partial H_{\Omega} (\gamma(t)) \hspace{2mm} \text{a.e.}, \\
\gamma(0) &= z_0 \in \partial \Omega,
\end{aligned}
$$
where $H_{\Omega}$ is $2$-homogeneous function such that $\Omega = \{H_{\Omega} \leq 1\}$. Since $\partial \Omega$ is $1$-dimensional, the previous problem has a unique solution, which is extendable to the whole $\R$ and periodic, with minimal period equal to the area of $\Omega$. This can be extended to the $1$-homogeneous flow $\Phi^t_{\Omega}$ on $\R^2\setminus \{0\}$ which extends continuously to the origin by $\Phi^t_{\Omega}(0)=0$. Since $H_{W_1 \times_2 \cdots \times_2 W_n} = H_{W_1} + \cdots + H_{W_n}$, we have that the characteristic on $\partial(W_1 \times_2 \cdots \times_2 W_n)$ splits. This further implies:
$$
\Phi_{H_{W_1 \times_2 \cdots \times_2 W_n}}^t(z_1,\dots,z_n) = (\Phi^t_{H_{W_1}}(z_1), \dots, \Phi^t_{H_{W_n}}(z_n)).
$$
Now we construct the homeomorphism $\Psi: \partial E(a_1, \dots, a_n) \to \partial(W_1 \times_2 \cdots \times_2 W_n)$:
$$
\Psi(r_1 e^{i 2\pi \theta_1}, \dots r_n e^{i 2\pi \theta_n}) = \left(\Phi_{H_{W_1}}^{\theta_1 a_1}(r_1), \dots, \Phi_{H_{W_n}}^{\theta_n a_n}(r_n)\right).
$$
One easily checks that:
$$
\Psi (e^{i 2\pi t /a_1} z_1, \dots e^{i 2\pi t /a_n} z_n) = \Phi^t_{H_{W_1 \times_2 \cdots \times_2 W_n}} (\Psi(z_1, \dots, z_n)).
$$

\subsection{Proof of Proposition \ref{prop:boundary_minimal}}

Assume that there exists an open set $U$ with $U \cap \partial (W_1 \times_2 \cdots \times_2 W_n) \neq \emptyset$ such that 
$$
c_1^{EH}(W_1 \times_2 \cdots \times_2 W_n \setminus U) = c_1^{EH}(W_1 \times_2 \cdots \times_2 W_n).
$$
From Theorem \ref{thm:main} we have that $c_1^{EH}(W_1 \times_2 \cdots \times_2 W_n) = c_1^{EH}(B^{2n}(a)) = a$, where $a$ is the area of $W_i$. Let $(z_1,\dots, z_n) \in U \cap \partial (W_1 \times_2 \cdots \times_2 W_n) $. Since $U$ is open, we can assume that $z_i \neq 0$ for all $i$. Every domain has a defining continuous function $H_{W_i} (z)= f_i(z/|z|) |z|^2$ for some $f_i: S^1 \to (0, + \infty)$. Alter each $f_i$ to $g_i$ in the neighborhood of $z_i/|z_i|$ so that:
\begin{itemize}
\item $g_i \geq f_i$,
\item Areas of $W_i' = \{ H'_i \leq 1\} $ are equal to $a' < a$, where $H'_i(z) = g_i(z/|z|) |z|^2$,
\item $W_1 \times_2 \cdots \times_2 W_n \setminus U \subset W_1' \times_2 \cdots \times_2 W_n'$ (see Figure \ref{fig:boundary_minimal}). 
\end{itemize}
 
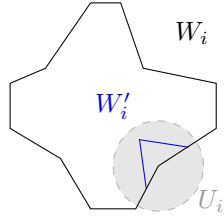
\begin{figure}[h!]
\centering
\begin{tikzpicture}

\draw[black!30!white, fill=black!10!white, dashed] (0.6, -0.8) circle (6mm);
\draw[blue] (0.44, -1.1) -- (0.35, -0.45) -- (1, -0.55);
\draw[black!40!white] node at (1,-1) [below right] {$U_i$};
\draw (-0.3, 1.37) -- (0.1, 1.37) -- (0.4, 0.5)-- (1.37, 0.3) -- (1.37, -0.3) -- (0.6, -0.8) -- (0.3, -1.37) -- (-0.3, -1.37) -- (-0.7, -0.7) -- (-1.37, -0.3) -- (-1.37, 0.3) -- (-0.9, 0.5) -- (-0.3, 1.37);

\draw[blue] node at (0,0) {$W_i'$};
\draw[dashed] node at (0.7, 0.7) [above right] {$W_i$};

\end{tikzpicture}
\caption{Altered star-shaped domains $W_i' \subset W_i$, and a scaled image $U_i$ of the intersection of $U$ with $i$-th complex line.}
\label{fig:boundary_minimal}
\end{figure}

By applying monotonicity of $c_1^{EH}$, and Theorem \ref{thm:main} to $W_1' \times_2 \cdots \times_2 W_n'$ we have:
$$
a = c_1^{EH}(W_1 \times_2 \cdots \times_2 W_n \setminus U) \leq c_1^{EH}(W_1' \times_2 \cdots \times_2 W_n') = c_1^{EH}(B^{2n}(a')) = a',
$$
which is a contradiction since $a' < a$. Here, we have used that Ekeland-Hofer capacities are preserved under the symplectomorphisms of the interiors of star-shaped domains. This follows from \cite{GR24}, where it is shown that Ekeland-Hofer capacities coincide with Gutt-Hutchings capacities, which satisfy the invariance under the symplectomorphism of the interior.

{\footnotesize
\bibliography{citations}
\bibliographystyle{alpha}
}
\Address
\end{document}